\documentclass[twoside, 11pt]{article}
\usepackage{amsmath,amssymb,amsthm,mathrsfs}
\usepackage[margin=2.5cm]{geometry} 
\usepackage{lipsum}
\usepackage{titlesec,hyperref}
\usepackage{fancyhdr}
\usepackage[numbers,sort&compress]{natbib}
\usepackage{color}

\pagestyle{fancy}
\fancyhf{}
\fancyhead[CO]{\footnotesize\it Orbital stability of solitary waves }
\fancyhead[CE]{\footnotesize\it Y.L.Ma,Y.M.Xiao}
\fancyfoot[CE,CO]{\footnotesize\rm\thepage}
\fancypagestyle{plain}
{
	\fancyhf{}
	
}

\linespread{1.1}

\titleformat{\subsection}{\it}{\thesubsection.\enspace}{1.5pt}{}
\titleformat{\subsubsection}{\it}{\thesubsubsection.\enspace}{1.5pt}{}
\numberwithin{equation}{section}

\newtheorem{thm}{Theorem}[section]
\newtheorem{assume}[thm]{Assume}
\newtheorem{lemma}[thm]{Lemma}
\newtheorem{defin}[thm]{Definition}

\newtheorem{prop}[thm]{Proposition}

\newtheoremstyle{rem}{10pt}{10pt}{\rmfamily}{}{\bfseries}{.}{.5em}{} 
\theoremstyle{rem}

\begin{document}

\title{Orbital stability of solitary waves for the Schr\"{o}dinger-Boussinesq system \hspace{-4mm}}

\author{Yilong Ma$^\dag$ \quad Yamin Xiao$^{\ddag,*}$   \\[10pt]	
	\small {$^\dag $School of Mathematical Sciences, Hebei International Studies University,  }\\
	\small {Shijiazhuang, 051132, P.R. China}\\[5pt]	
	\small {$^\ddag$ School of Mathematical Sciences, Hebei Normal University, }\\
	\small {Shijiazhuang, 050024, P.R.
		China}\\[5pt]
}

\footnotetext{*Corresponding author\\
~~~~~E-mail addresses: \it mylshr123@126.com(Y. Ma), \it xiaoyamin@hebtu.edu.cn(Y. Xiao).}
\date{}

\maketitle

\begin{abstract}
This paper studies the orbital stability of solitary waves for the following Schr\"{o}dinger-Boussinesq system
\begin{equation*}
\begin{cases}
{ \begin{array}{ll}
i\varepsilon_t+\varepsilon_{xx}=n\varepsilon+\gamma |\varepsilon|^2\varepsilon, \\
n_{tt}-n_{xx}+ \alpha n_{xxxx}-\beta(n^2)_{xx}=|\varepsilon|^2_{xx},
 \end{array} }  (t,x)\in \mathbb{R}^2.
\end{cases}
\end{equation*}
By applying the abstract results and detailed spectral analysis, we obtain the orbital stability of solitary waves. The result can be
regarded as an extension of the results of \cite{ F-P,H,W}.
\end{abstract}

\vspace{.2in} {\bf Key words:} Solitary waves, orbital stability, Schr\"{o}dinger-Boussinesq system

\vspace{.2in} {\bf MSC(2010):} 74J35, 35B35, 35Q55 
\section{Introduction}
\quad \quad The generalized  Schr\"{o}dinger-Boussinesq system can be written as:
\begin{equation}\label{SB}
\begin{cases}
	{ \begin{array}{ll}
			i\varepsilon_t+\varepsilon_{xx}=n\varepsilon+\gamma |\varepsilon|^2\varepsilon, \\
			n_{tt}-n_{xx}+\alpha n_{xxxx}-\beta(|n|^{p-1}n)_{xx}=|\varepsilon|^2_{xx},
	\end{array} }  (t,x)\in \mathbb{R}^2,
\end{cases}
\end{equation}
where $p>1$, $\alpha,\beta,\gamma\neq0$ are real parameters, $\varepsilon(t,x)$ is a complex valued and $n(t,x)$ is a real valued function.
This kind of problem appears in the study of interaction of solutions in optics. Linares and Navas \cite{L-N} proved the global well-posedness of initial value problem to the system \eqref{SB} in $H^1(\mathbb{R})\times H^1(\mathbb{R})$ with $\beta>0$, and for $\beta<0$, the small global well-posedness is established. In \cite{H-F}, Hona and Fan considered the existence of exact solitary wave solutions for system \eqref{SB}. Later, the orbital stability and instability of solitary wave solutions for \eqref{SB} have been extensively
studied.

In this paper, we consider the existence and orbital stability of solitary wave solutions for \eqref{SB} with $p=2$ (hereafter referred to as the SB-system)  
\begin{align}\tag{SB}\label{1.1}
	\begin{cases}
		{ \begin{array}{ll}
				i\varepsilon_t+\varepsilon_{xx}-n\varepsilon=\gamma |\varepsilon|^2\varepsilon, \\
				n_{tt}-n_{xx}+\alpha n_{xxxx}-\beta(n^2)_{xx}=|\varepsilon|^2_{xx},
		\end{array} }  (t,x)\in \mathbb{R}^2,
	\end{cases}
\end{align}
where $\alpha,\beta,\gamma>0$. \eqref{1.1} can be rewritten as
\begin{equation}\label{3.1}
\begin{cases}
	{ \begin{array}{ll}
			i\varepsilon_t+\varepsilon_{xx}-n\varepsilon=\gamma|\varepsilon|^2\varepsilon, \\
			n_t=\phi_{xx},\\
			\phi_t=n-\alpha n_{xx}+\beta n^2+|\varepsilon|^2.
	\end{array} }
\end{cases}
\end{equation}
Here, we mean the solitary wave solutions of \eqref{3.1} having the following form
\begin{align}\label{2.1}
	\varepsilon(t,x)=\mathrm{e}^{-i\omega t}\varepsilon_{\omega,v}(x-vt), \ \ n(t,x)=n_{\omega,v}(x-vt) \ \mbox{and} \ \ \phi(t,x)=\phi_{\omega,v}(x-vt),
\end{align}
where $\omega, v, q$ are constants, $\varepsilon_{\omega,v}(x):=\mathrm{e}^{iqx}\hat{\varepsilon}_{\omega,v}(x)$ and $\hat{\varepsilon}_{\omega,v}$, $n_{\omega,v}$ and $\phi_{\omega,v}$ are real function. 

We prove the orbital stability of solitary waves of system \eqref{1.1} within the abstract context of Hamiltonian system developed by Grillakis, Shatah, and Strauss in \cite{G-S-S1,G-S-S2}. Hakkaev \cite{H} proved the stability of solitary waves for system \eqref{1.1} with $\alpha=1$, $\gamma=0$ and $\beta=3$. For $\alpha=1$, $\gamma=\beta=0$, Farah and Pastor \cite{F-P} established the orbital stability of standing wave solutions and, Esfahani and Pastor \cite{E-P} obtained the stability result with replacing $n_{xxxx}$ by $-n_{xxtt}$.
When $\alpha=\beta=\gamma=0$, system \eqref{SB} reduces to the Zakharov system
\begin{align}\label{Zakharov}
	\begin{cases}
		{ \begin{array}{ll}
				i\varepsilon_t+\varepsilon_{xx}- n\varepsilon=0, \\
				n_{tt}-n_{xx}=|\varepsilon|^2_{xx},
		\end{array} } (t,x)\in \mathbb{R}^2.
	\end{cases}
\end{align}
Wu \cite{W} proved that the traveling wave solutions of system \eqref{Zakharov} are orbitally stable. Liang \cite{L2011} established the modulational instability to system \eqref{SB}. There are many interesting results concerning the orbital stability and instability of solitary waves for Boussinesq-type and nonlinear Schr\"{o}dinger-type system \cite{C-S,E-L,G-X-B,L-Z,Z-L-L-C}. 
Our work covers the results \cite{W,F-P,H}.

{\bf Notations}. (1) Denote
\begin{align*}
\overrightarrow{u}=(\varepsilon,n,\phi), \ \ \
\overrightarrow{\Phi}_{\omega,v}(x)=\left(\varepsilon_{\omega,v}(x),n_{\omega,v}(x),\phi_{\omega,v}(x)\right).
\end{align*}
(2) Define $X=H^1_{complex}(\mathbb{R})\times H^1_{real}(\mathbb{R})\times L^2_{real}(\mathbb{R})$ with the inner product
\begin{align*}
	 (\overrightarrow{u}_1,\overrightarrow{u}_2)=\mbox{Re}\int_{\mathbb{R}}(\varepsilon_1\overline{\varepsilon}_2+\varepsilon_{1x}\overline{\varepsilon}_{2x}+
	n_1n_2+n_{1x}n_{2x}+\phi_1\phi_2) d x,
\end{align*}
and the dual space of $X$ is $X^*=H^{-1}_{complex}(\mathbb{R})\times H^{-1}_{real}(\mathbb{R})\times L^2_{real}(\mathbb{R})$. There exists a natural isomorphism $I: X\rightarrow X^*$ defined by $\langle I\overrightarrow{u}_1,\overrightarrow{u}_2\rangle=(\overrightarrow{u}_1,\overrightarrow{u}_2)$,
where $\langle\cdot,\cdot\rangle$ denotes the pairing between $X$ and $X^*$, and
\begin{align*}
	 \langle\overrightarrow{f},\overrightarrow{u}\rangle=\mbox{Re}\int_{\mathbb{R}}(f_1\overline{\varepsilon}+f_2n+f_3\phi) d x, \ \ \ I=\begin{pmatrix}&1-\frac{\partial^2}{\partial x^2} \ \ &\ \ \ &\\ & \ \ &1-\frac{\partial^2}{\partial x^2} \ \ &\\ & \ \ & \ \ &1\end{pmatrix}.
\end{align*}
(3) Let $T_1, T_2$ be one-parameter groups of the unitary operator on $X$ defined by
\begin{align}\label{3.8}
	T_1(s_1)\overrightarrow{u}(\cdot)=\overrightarrow{u}(\cdot-s_1), \ T_2(s_2)\overrightarrow{u}(\cdot)=(\mathrm{e}^{-is_2}\varepsilon(\cdot),n(\cdot),\phi(\cdot)), \ \mbox{for} \ \overrightarrow{u}(\cdot)\in X,\ s_1,s_2\in \mathbb{R}.
\end{align}
Obviously,
\begin{align*}
	T_1'(0)=\begin{pmatrix}&-\frac{\partial}{\partial x} \ \ \ &\ \ \ \ & \\ \ & \ \ \ &-\frac{\partial}{\partial x} \ \ \ & \\ \ & \ \ \ \ \ \ \ \ & \ \ \ \ \ \ &-\frac{\partial}{\partial x} \end{pmatrix} \ \ \mbox{and} \ \ T_2'(0)=\begin{pmatrix} &-i \  \ &\ \ \ \ &\ \\ \ & \ \ \ &0 \ \ \ & \\ \ & \ \ \ \ & \ \ \ \ \ \ &0 \end{pmatrix}.
\end{align*}
Note that the system \eqref{3.1} is invariant under $T_1$ and $T_2$ and $T_1(vt)T_2(\omega t)\overrightarrow{\Phi}_{\omega,v}(x)$ is a solitary wave solution of \eqref{3.1}.

In order to present our main results, we recall the definition of the orbital stability of solitary waves $T_1(vt)T_2(\omega t)\overrightarrow{\Phi}_{\omega,v}(x)$.
\begin{defin}[\cite{G-S-S1}]
	The solitary wave $T_1(vt)T_2(\omega t)\overrightarrow{\Phi}_{\omega,v}(x)$ is orbitally stable if for all $\epsilon>0$, there exists $\delta>0$ with the following property: If $\|\overrightarrow{u}_0-\overrightarrow{\Phi}_{\omega,v}\|_X<\delta$ and $\overrightarrow{u}(t)$ is a solution of (\ref{1.1}) in some interval $[0,t_0)$ with $\overrightarrow{u}(0)=\overrightarrow{u}_0$, then $\overrightarrow{u}(t)$ can be continued to a solution in $0\leq t<+\infty$, and
	\begin{align}\label{1.7}
		\sup_{0<t<\infty}\inf_{s_1\in \mathbb{R}}\inf_{s_2\in \mathbb{R}}\|\overrightarrow{u}(t)-T_1(s_1)T_2(s_2)\overrightarrow{ \Phi}_{\omega,v}\|_X<\epsilon.
	\end{align}
	Otherwise, $T_1(vt)T_2(\omega t)\overrightarrow{\Phi}_{\omega,v}$ is called orbitally unstable.
\end{defin}
The paper is organized as follows. In section 2, we introduce the existence of solitary waves and in section 3, we establish some spectral properties we need and prove the orbital stability of solitary waves.
\section{The existence of solitary waves}
\quad \quad In this section, we focus on the existence of solitary waves for \eqref{1.1} system. Putting (\ref{2.1}) into (\ref{3.1}), we get
\begin{align}\label{2.2}
\begin{cases}
{ \begin{array}{ll}
\varepsilon_{\omega,v}''-n_{\omega,v}\varepsilon_{\omega,v}+\omega\varepsilon_{\omega,v}-iv\varepsilon_{\omega,v}'-\gamma|\varepsilon|^2\varepsilon=0,\\
\hat{\varepsilon}_{\omega,v}''+i(2q-v)\hat{\varepsilon}_{\omega,v}'
+(\omega+qv-q^2-n_{\omega,v}-\gamma|\hat{\varepsilon}|^2)\hat{\varepsilon}_{\omega,v}=0,\\
-vn_{\omega,v}=\phi'_{\omega,v},\\
-v\phi'_{\omega,v}=n_{\omega,v}-\alpha n_{\omega,v}''+\beta n_{\omega,v}^2+\hat{\varepsilon}_{\omega,v}^2.
 \end{array} }
\end{cases}
\end{align}
By $(\ref{2.2})_2$ we have $q=\frac{v}{2}$. Let $\hat{\varepsilon}_{\omega,v}=c_1\mbox{sech}c_2x$ satisfy $(\ref{2.2})_2$ with constants $c_1,c_2$ to be determined later, we deduce that
\begin{align}\label{2.3}
\hat{\varepsilon}_{\omega,v}''=(c_2^2-2c_2^2\mbox{sech}^2c_2x)\hat{\varepsilon}_{\omega,v}=(-w-\frac{v^2}{4}+n_{\omega,v}+\gamma|\hat{\varepsilon}_{\omega,v}|^2)\hat{\varepsilon}_{\omega,v}.
\end{align}
Assume $n_{\omega,v}(x)\rightarrow0$ as $x\rightarrow\infty$, one has
\begin{align}\label{2.4}
n_{\omega,v}=-2c_2^2\mbox{sech}^2c_2x-\gamma|\hat{\varepsilon}_{\omega,v}|^2+c_2^2+\omega+\frac{v^2}{4}=-(2c_2^2+\gamma c_1^2)\mbox{sech}^2c_2x,
\end{align}
and
\begin{align}\label{2.5}
c_2^2=-\omega-\frac{v^2}{4}.
\end{align}
Insetting (\ref{2.4})-(\ref{2.5}) into (\ref{2.2}), we obtain that
\begin{align*}
\frac{6\alpha }{\beta}c_2^2=2c_2^2+\gamma c_1^2, \
c_2=\sqrt{-\omega-\frac{v^2}{4}}, \
c_1=\sqrt{\frac{6\alpha c_2^2}{\beta}(1-v^2-4\alpha c_2^2)}.
\end{align*}
In this way, we arrive at the solitary waves solution for system \eqref{3.1}
\begin{equation}\label{2.7}
\begin{cases}
	\hat{\varepsilon}_{\omega,v}=\sqrt{\frac{6\alpha}{\beta} \sigma \eta}\mbox{sech}(\sqrt{\sigma}x),\\
	n_{\omega,v}=-\frac{6\alpha}{\beta}\sigma\mbox{sech}^2(\sqrt{\sigma}x),\\
	\phi_{\omega,v}=\frac{6\alpha}{\beta}v\sqrt{\sigma}\mbox{tanh}(\sqrt{\sigma}x),
\end{cases}
\end{equation}
where $\sigma:=-\omega-\frac{v^2}{4}>0$, $\eta:=1-v^2-4\alpha\sigma>0$. Therefore, we have the following theorem
\begin{thm}\label{thm1.1}
	For any real constants $\omega,v$ satisfying
	\begin{align}\label{2.8}
		-\frac{v^2}{4}-\frac{1-v^2}{4\alpha}<\omega<-\frac{v^2}{4}, \ 1-v^2>0,
	\end{align}
	there exist solitary waves of ({\ref{1.1}}) in the form of (\ref{2.1}), with $\hat{\varepsilon}_{\omega,v}$ and $n_{\omega,v}$ satisfying (\ref{2.7}).
\end{thm}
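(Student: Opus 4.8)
The plan is to prove Theorem \ref{thm1.1} by explicit construction: reduce the PDE system \eqref{3.1} to a system of ODEs via the traveling-wave ansatz \eqref{2.1}, solve it with a hyperbolic-secant profile, and then recover \eqref{2.8} as the requirement that the resulting amplitudes be real and nonzero. First I would substitute \eqref{2.1} into \eqref{3.1} to obtain the ODE system \eqref{2.2}. The phase $q$ is fixed at once: the coefficient of the first-order term in $(\ref{2.2})_2$ is $i(2q-v)$, so taking $q=\frac{v}{2}$ eliminates it and leaves a real second-order equation for $\hat{\varepsilon}_{\omega,v}$. Next I would remove $\phi_{\omega,v}$ by using $(\ref{2.2})_3$, which gives $\phi'_{\omega,v}=-v\,n_{\omega,v}$; inserting this into $(\ref{2.2})_4$ collapses the two $\phi$-equations into the single second-order equation
\begin{align*}
(1-v^2)\,n_{\omega,v}-\alpha\, n''_{\omega,v}+\beta\, n_{\omega,v}^2+\hat{\varepsilon}_{\omega,v}^2=0
\end{align*}
for the pair $(\hat{\varepsilon}_{\omega,v},n_{\omega,v})$.

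With the ansatz $\hat{\varepsilon}_{\omega,v}=c_1\mbox{sech}(c_2x)$, the profile equation \eqref{2.3} forces $n_{\omega,v}=-(2c_2^2+\gamma c_1^2)\mbox{sech}^2(c_2x)$ together with the dispersion relation $c_2^2=-\omega-\frac{v^2}{4}$, exactly as in \eqref{2.4}--\eqref{2.5}, once the decay condition $n_{\omega,v}\to0$ is imposed. It then remains to check that this $n_{\omega,v}=A\,\mbox{sech}^2(c_2x)$ solves the reduced $n$-equation above. Using $(\mbox{sech}^2)''=4c_2^2\mbox{sech}^2-6c_2^2\mbox{sech}^4$, I would match the coefficients of $\mbox{sech}^2$ and $\mbox{sech}^4$ separately: the $\mbox{sech}^4$ balance yields $\beta A+6\alpha c_2^2=0$, i.e. the compatibility relation $\frac{6\alpha}{\beta}c_2^2=2c_2^2+\gamma c_1^2$, while the $\mbox{sech}^2$ balance yields $c_1^2=\frac{6\alpha c_2^2}{\beta}(1-v^2-4\alpha c_2^2)$. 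Finally, integrating $\phi'_{\omega,v}=-v\,n_{\omega,v}$ gives $\phi_{\omega,v}=\frac{6\alpha}{\beta}v c_2\,\mbox{tanh}(c_2x)$. Writing $\sigma:=c_2^2$ and $\eta:=1-v^2-4\alpha\sigma$ then produces precisely the profiles \eqref{2.7}.

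To finish, I would translate reality and nontriviality into \eqref{2.8}. The profiles are genuine solitary waves (real, smooth, decaying as $|x|\to\infty$) exactly when $c_1,c_2$ are real and nonzero, that is $\sigma>0$ and $\eta>0$. The condition $\sigma>0$ reads $\omega<-\frac{v^2}{4}$; the condition $\eta>0$ reads $\omega>-\frac{v^2}{4}-\frac{1-v^2}{4\alpha}$; and nonemptiness of the resulting interval (equivalently, that $\eta>0$ is attainable for some admissible $\sigma$, using $\alpha,\sigma>0$) forces $1-v^2>0$. These are exactly the hypotheses \eqref{2.8}. The main obstacle is the consistency of an overdetermined ansatz: a single scalar profile with only two free constants $c_1,c_2$ must simultaneously satisfy the $\hat{\varepsilon}$-equation and the reduced $n$-equation, the latter imposing two independent balances. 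The delicate point is that the amplitude $A$ forced by the $\hat{\varepsilon}$-equation must coincide with the value $-6\alpha c_2^2/\beta$ demanded by the $\mbox{sech}^4$ balance, and that no uncancellable higher power $\mbox{sech}^6$ is generated; this closes only because $\beta n_{\omega,v}^2$ and $\alpha n''_{\omega,v}$ contribute at orders $\mbox{sech}^2$ and $\mbox{sech}^4$ alone. Once this closure is checked, the positivity bookkeeping leading to \eqref{2.8} is routine.
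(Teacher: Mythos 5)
Your route is the same one the paper takes: substitute the ansatz to get \eqref{2.2}, fix $q=\tfrac v2$, eliminate $\phi$ to obtain $\alpha n''_{\omega,v}=(1-v^2)n_{\omega,v}+\beta n_{\omega,v}^2+\hat{\varepsilon}_{\omega,v}^2$, impose $\mathrm{sech}$/$\mathrm{sech}^2$ profiles and match coefficients. Your two balances from the reduced $n$-equation are computed correctly, and the closure you do verify (no $\mathrm{sech}^6$ term is generated) is fine. The genuine gap is precisely the consistency check you single out as ``the delicate point'' and then defer with ``once this closure is checked'': it cannot be completed on the whole region \eqref{2.8}. Once the decay condition fixes $c_2^2=\sigma$, the single remaining unknown $c_1^2$ is determined \emph{twice}: the $\hat{\varepsilon}$-equation forces the amplitude of $n_{\omega,v}$ to be $A=-(2\sigma+\gamma c_1^2)$, which combined with the $\mathrm{sech}^4$ balance $A=-6\alpha\sigma/\beta$ gives $\gamma c_1^2=\bigl(\tfrac{6\alpha}{\beta}-2\bigr)\sigma$; the $\mathrm{sech}^2$ balance independently gives $c_1^2=\tfrac{6\alpha}{\beta}\sigma\eta$. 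Equating them yields the extra compatibility relation $3\alpha\gamma\eta=3\alpha-\beta$. Since $\gamma>0$ and $\eta=1-v^2-4\alpha\sigma$ sweeps out the whole interval $(0,1-v^2)$ as $\omega$ ranges over \eqref{2.8}, this relation pins $\eta$ (hence $\omega$ as a function of $v$) to a single value instead of holding identically. Equivalently: the profiles \eqref{2.7} satisfy the reduced $n$-equation but violate the $\hat{\varepsilon}$-equation by the residual $-(2\sigma+A+\gamma c_1^2)\,\mathrm{sech}^2(\sqrt{\sigma}x)\,\hat{\varepsilon}_{\omega,v}$, which vanishes only under that relation.

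In fairness, you have inherited this rather than introduced it: the paper's own derivation records $\tfrac{6\alpha}{\beta}c_2^2=2c_2^2+\gamma c_1^2$ side by side with $c_1=\sqrt{\tfrac{6\alpha c_2^2}{\beta}(1-v^2-4\alpha c_2^2)}$ and never reconciles the two, then asserts existence on all of \eqref{2.8}. But a blind proof must either carry out the reconciliation (which succeeds on all of \eqref{2.8} only in degenerate cases such as $\gamma=0$, $\beta=3\alpha$, i.e.\ Hakkaev's setting) or restrict the admissible $(\omega,v)$ to the curve $3\alpha\gamma(1-v^2-4\alpha\sigma)=3\alpha-\beta$. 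As written, the step ``once this closure is checked'' is the step that fails.
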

\section{Orbital stability of solitary waves}
\quad \quad In this section, we prove the orbital stability of solitary wave solutions for \eqref{1.1} system. Define
\begin{align*}
E(\overrightarrow{u})=\int|\varepsilon_x|^2+n|\varepsilon|^2+\frac{\gamma}{2}|\varepsilon|^4
+\frac{1}{2}n^2+\frac{\alpha}{2}|n_x|^2+\frac{\beta}{3}n^3+\frac{1}{2}|\phi_x|^2 d x,
\end{align*}
then the system \eqref{3.1} can be written as the Hamiltonian form
\begin{align}\label{3.3}
\frac{ d \overrightarrow{u}}{ d t}=JE'(\overrightarrow{u}),
\end{align}
where $J$ is a skew symmetric linear operator defined by
\begin{align*}
J=\begin{pmatrix}&-\frac{i}{2} \ \ &0\ \ \ &0\ \ \\ &0 \ \ \ &0\ \ \ &-1\\ &0 \ \ &1 \ \ &0 \end{pmatrix}
\end{align*}
and
\begin{align*}
E'(\overrightarrow{u})=\begin{pmatrix}-2\varepsilon_{xx}+2n\varepsilon+2\gamma|\varepsilon|^2\varepsilon\\ |\varepsilon|^2+n-\alpha n_{xx}+\beta n^2\\-\phi_{xx}\end{pmatrix}
\end{align*}
is the Frechet derivatives of $E(\overrightarrow{u})$.
As in \cite{G-S-S1,G-S-S2}, since $T_1'(0)=J B_1$, $T_2'(0)=J B_2$, one gets $$B_1=\begin{pmatrix}&-2i\frac{\partial}{\partial x} \ &0\ \ &0 \\ &0 \ \ &0 \ \ \ &-\frac{\partial}{\partial x}\\ &0 \ \ &\frac{\partial}{\partial x} \ \ &0\end{pmatrix} \  \mbox{and} \ \ B_2=\begin{pmatrix}&2 \ \ \ & \ \ & \\ & \ \ \ &0 \ \ & \\ & \ \ \ & \ \ &0 \end{pmatrix}.$$
Let
\begin{align*}
Q_1(\overrightarrow{u})=\frac{1}{2}\langle B_1\overrightarrow{u},\overrightarrow{u}\rangle=\frac{1}{2}\int -\phi'n+\phi n'-2i\varepsilon'\varepsilon d x=-\int \phi'n d x+Im\int \varepsilon'\overline{\varepsilon} d x,
\end{align*}
\begin{align*}
Q_2(\overrightarrow{u})=\frac{1}{2}\langle B_2\overrightarrow{u},\overrightarrow{u}\rangle=\int |\varepsilon|^2 d x.
\end{align*}
It is easy to verify that $E(\overrightarrow{u})$, $Q_1(\overrightarrow{u})$ and $Q_2(\overrightarrow{u})$ are invariant under $T_1$ and $T_2$, namely
\begin{align*}
E(T_1(s_1)T_2(s_2)\overrightarrow{u})=E(\overrightarrow{u}),\ \ Q_1(T_1(s_1)T_2(s_2)\overrightarrow{u})=Q_1(\overrightarrow{u}),\ \ Q_2(T_1(s_1)T_2(s_2)\overrightarrow{u})=Q_2(\overrightarrow{u})
\end{align*}
for any $s_1,s_2\in\mathbb{R}$. Moreover, for any $t\in\mathbb{R}$, they are formally conserved under the flow of (\ref{3.1}), that is
\begin{align}
E(\overrightarrow{u}(t))=E(\overrightarrow{u}(0)), Q_1(\overrightarrow{u}(t))=Q_1(\overrightarrow{u}(0)), \ Q_2(\overrightarrow{u}(t))=Q_2(\overrightarrow{u}(0)).
\end{align}
By \eqref{2.2}, we deduce that $\overrightarrow{\Phi}_{\omega,v}$ is a critical point of function $E-vQ_1-\omega Q_2$,
\begin{align}\label{3.15}
E'(\overrightarrow{\Phi}_{\omega,v})-vQ_1'(\overrightarrow{\Phi}_{\omega,v})-\omega Q_2'(\overrightarrow{\Phi}_{\omega,v})=0,
\end{align}
where $Q_1'(\overrightarrow{u})=\begin{pmatrix}-2i\varepsilon'\\ -\phi' \\n'\end{pmatrix}$ and $Q_2'(\overrightarrow{u})=\begin{pmatrix}2\varepsilon\\ 0 \\ 0\end{pmatrix}$. Now, we define the operator $H_{\omega,v}: X\rightarrow X^*$:
\begin{align*}
\nonumber H_{\omega,v}&=E''(\overrightarrow{ \Phi}_{\omega,v})-vQ_1''(\overrightarrow{ \Phi}_{\omega,v})-\omega Q_2''(\overrightarrow{ \Phi}_{\omega,v})\\
&=\begin{pmatrix}&-2\partial_{xx}+2n+2\gamma(|\varepsilon|^2+2\varepsilon^2)+2iv\partial_{x}-2\omega\ \ &2\varepsilon \ \ &0 \\ &2\varepsilon  \  &1-\alpha\partial_{xx}+2\beta n  \ \ &v\partial_x \\ &0 \ \ &-v\partial_x \ \ &-\partial_{xx} \end{pmatrix},
\end{align*}
and we get that
\begin{align}\label{3.17}
H_{\omega,v}\overrightarrow{ \psi}=\begin{pmatrix}-2\psi_{1xx}+2n\psi_1+2\gamma(|\varepsilon|^2\psi_1+\varepsilon\overline{\psi}_1\varepsilon+\overline{\varepsilon}\psi_1\varepsilon) +2iv\psi_{1x}-2\omega\psi_1+2\varepsilon\psi_2\\ \overline{\varepsilon}\psi_1+\varepsilon\overline{\psi}_1+2\beta n\psi_2-\alpha\psi_{2xx}+\psi_2+v\psi_{3x}\\ -v\psi_{2x}-\psi_{3xx}\end{pmatrix}
\end{align}
with $\overrightarrow{\psi}=(\psi_1,\psi_2,\psi_3)\in X$. Observe that $H_{\omega,v}$ is self-adjoint operator. The spectrum of $H_{\omega,v}$ consists of the real numbers $\lambda$ such that $H_{\omega,v}-\lambda I$ is not invertible. Concerning the operator $H_{\omega,v}$ we have the following spectral properties.
\begin{prop}\label{prop}
 (1) Zero is a simple isolated eigenvalue of $H_{\omega,v}$ with $T_1'(0)\overrightarrow{ \Phi}_{\omega,v}(x)$ and $T_2'(0)\overrightarrow{ \Phi}_{\omega,v}(x)$ as its eigenfunction.

(2) On the negative axis $(-\infty,0)$, the spectrum set of $H_{\omega,v}$ admits nothing but only one simple eigenvalue.

(3) Essential spectrum $\sigma_{ess}(H_{\omega,v})$ lies on the positive real axis, admitting a positive distance to the origin.
\end{prop}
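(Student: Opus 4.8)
\noindent\emph{Proof strategy.}
The plan is to strip the phase factor $\mathrm{e}^{iqx}$ ($q=\tfrac v2$) from the first slot and split it into real and imaginary parts, which block--diagonalises the quadratic form of $H_{\omega,v}$; assertions (1)--(3) are then read off the two resulting pieces. Writing $\psi_1=\mathrm{e}^{iqx}(u_1+iu_2)$ with $u_1,u_2$ real and using $q=v/2$ (which cancels exactly the first--order complex term produced by $2iv\partial_x$), a direct substitution into \eqref{3.17} separates $\langle H_{\omega,v}\overrightarrow\psi,\overrightarrow\psi\rangle$ into a scalar contribution in $u_2$ governed by
\[
L_-:=-2\partial_{xx}+2\bigl(\sigma+n_{\omega,v}+\gamma\hat\varepsilon_{\omega,v}^2\bigr),
\]
and a contribution in $(u_1,\psi_2,\psi_3)$ governed by the self--adjoint operator
\[
L_+:=\begin{pmatrix}-2\partial_{xx}+2(\sigma+n_{\omega,v}+3\gamma\hat\varepsilon_{\omega,v}^2)&2\hat\varepsilon_{\omega,v}&0\\ 2\hat\varepsilon_{\omega,v}&1-\alpha\partial_{xx}+2\beta n_{\omega,v}&v\partial_x\\ 0&-v\partial_x&-\partial_{xx}\end{pmatrix}.
\]
That $T_1'(0)\overrightarrow\Phi_{\omega,v}$ and $T_2'(0)\overrightarrow\Phi_{\omega,v}$ lie in $\ker H_{\omega,v}$ needs no computation: differentiating the identity \eqref{3.15} along the invariant family $T_1(s_1)T_2(s_2)\overrightarrow\Phi_{\omega,v}$ in $s_1,s_2$ at the origin gives $H_{\omega,v}T_j'(0)\overrightarrow\Phi_{\omega,v}=0$. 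Under the splitting $T_2'(0)\overrightarrow\Phi_{\omega,v}$ sits purely in the $u_2$--sector ($u_2=-\hat\varepsilon_{\omega,v}$), while $T_1'(0)\overrightarrow\Phi_{\omega,v}$ contributes $(\hat\varepsilon_{\omega,v}',n_{\omega,v}',\phi_{\omega,v}')$ to the $(u_1,\psi_2,\psi_3)$--sector and a multiple of $\hat\varepsilon_{\omega,v}$ to the $u_2$--sector.

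For $L_-$ I would argue by nodelessness of the ground state. By \eqref{2.4} one has $n_{\omega,v}+\gamma\hat\varepsilon_{\omega,v}^2=-2\sigma\,\mbox{sech}^2(\sqrt\sigma x)$, so in the variable $s=\sqrt\sigma x$ the operator is the rescaled P\"oschl--Teller operator $L_-=2\sigma[-\partial_{ss}+1-2\,\mbox{sech}^2 s]$, whose spectrum is explicit: a single eigenvalue at $0$ and essential spectrum at positive distance from $0$. Since $L_-\hat\varepsilon_{\omega,v}=0$ — this is precisely \eqref{2.3} — and $\hat\varepsilon_{\omega,v}>0$ is nodeless, $0$ is the lowest eigenvalue and is simple. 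Hence $L_-\ge0$, $\ker L_-=\mathrm{span}\{\hat\varepsilon_{\omega,v}\}$, and the phase sector contributes no negative eigenvalue, exactly one kernel dimension (the $T_2$--generator), and positive essential spectrum.

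The whole of (2)--(3) is carried by $L_+$, which I would first simplify by completing the square in $\psi_3$. Integrating by parts, the $\psi_3$--dependent part of $\langle L_+\Psi,\Psi\rangle$ equals $\int(\psi_3'+v\psi_2)^2\,dx-v^2\int\psi_2^2\,dx$, so
\[
\langle L_+\Psi,\Psi\rangle=\int(\psi_3'+v\psi_2)^2\,dx+\bigl\langle\mathcal L(u_1,\psi_2),(u_1,\psi_2)\bigr\rangle,
\]
where the leftover $-v^2\psi_2^2$ has been absorbed into the $(2,2)$ entry to produce the self--adjoint $2\times2$ matrix Schr\"odinger operator
\[
\mathcal L=\begin{pmatrix}-2\partial_{xx}+2(\sigma+n_{\omega,v}+3\gamma\hat\varepsilon_{\omega,v}^2)&2\hat\varepsilon_{\omega,v}\\ 2\hat\varepsilon_{\omega,v}&-\alpha\partial_{xx}+(1-v^2)+2\beta n_{\omega,v}\end{pmatrix}.
\]
Because $\int(\psi_3'+v\psi_2)^2$ is nonnegative and can be annihilated by the choice $\psi_3'=-v\psi_2$ without altering the $(u_1,\psi_2)$--part, the negative spectrum and the kernel of $L_+$ are in one--to--one correspondence with those of $\mathcal L$, the remaining $\psi_3$--directions being nonnegative. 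Differentiating \eqref{2.2} in $x$ shows $(\hat\varepsilon_{\omega,v}',n_{\omega,v}')\in\ker\mathcal L$, and since both entries are odd and vanish only at $x=0$ this kernel vector has a single node. For the essential spectrum I would apply Weyl's theorem: $n_{\omega,v},\hat\varepsilon_{\omega,v}^2$ decay exponentially, so $\sigma_{ess}(\mathcal L)$ coincides with that of the constant--coefficient operator obtained by deleting them, and the Fourier symbol of the generalized problem $\mathcal L\,\widehat{(\cdot)}=\lambda\,\mathrm{diag}(1+\xi^2,1+\xi^2)\,\widehat{(\cdot)}$ produces two branches bounded below by positive constants determined by $\sigma>0$ and $1-v^2>0$ from \eqref{2.8}; this is (3).

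The crux — and the step I expect to be hardest — is establishing $n(\mathcal L)=1$ in (2). Since the potentials are integer multiples of $\mbox{sech}^2(\sqrt\sigma x)$, the diagonal entries $\mathcal L_{11},\mathcal L_{22}$ are scalar P\"oschl--Teller operators with explicitly known discrete spectra (read off from the well depths $(\tfrac{12\alpha}{\beta}-6)\sigma$ and $-12\alpha\sigma$ against the thresholds $\sigma$ and $1-v^2$), so the decoupled operator $\mathrm{diag}(\mathcal L_{11},\mathcal L_{22})$ can be counted outright; restoring the coupling $2\hat\varepsilon_{\omega,v}$ by analytic continuation and tracking the eigenvalues that cross $0$ (crossings occur only against a zero mode, and the only zero mode of the coupled operator is the single--node vector $(\hat\varepsilon_{\omega,v}',n_{\omega,v}')$) should pin the net count to one. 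The difficulty is precisely that this count is not detected by a single test direction and that intermediate crossings must be excluded; here the constraints $\sigma>0,\ 1-v^2>0$ and above all $\eta=1-v^2-4\alpha\sigma>0$ from \eqref{2.8} are what force the final index. Equivalently, one may invoke the oscillation (Maslov--index) theory for matrix Schr\"odinger operators on the line, whereby the single nodal point of the zero mode $(\hat\varepsilon_{\omega,v}',n_{\omega,v}')$ forces exactly one eigenvalue strictly below $0$. I would record separately the identity $\mathcal L\,\partial_\sigma\overrightarrow\Phi_{\omega,v}=(-2\hat\varepsilon_{\omega,v},0)^{\!\top}$, giving $\langle\mathcal L\,\partial_\sigma\overrightarrow\Phi_{\omega,v},\partial_\sigma\overrightarrow\Phi_{\omega,v}\rangle=-\partial_\sigma\|\hat\varepsilon_{\omega,v}\|_{L^2}^2$, since it feeds the subsequent stability criterion of \cite{G-S-S1,G-S-S2} through the Hessian $d''$ rather than the count itself. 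Assembling the three steps — $L_-\ge0$ with simple kernel, $\mathcal L$ (hence $L_+$) with one simple negative eigenvalue and one kernel dimension, and essential spectra at positive distance from $0$ — yields precisely (1)--(3).
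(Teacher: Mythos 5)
Your reduction agrees with the paper's up to the point where the real difficulty begins. The phase stripping $\psi_1=\mathrm{e}^{ivx/2}(y_1+iy_2)$, the identification of the imaginary-part operator (your $L_-$ is the paper's $L_2$, handled identically via the nodeless kernel element $\hat\varepsilon$), the completion of the square in $\psi_3$ producing $\int(\psi_3'+v\psi_2)^2\,dx$ with the shift $-v^2$ into the $(2,2)$ entry, and the Weyl argument for the essential spectrum are all exactly what appears in \eqref{3.26}. Where you part company with the paper is the remaining coupled $2\times 2$ operator $\mathcal L=\bigl(\begin{smallmatrix}\overline L_1&2\hat\varepsilon\\ 2\hat\varepsilon&\overline L_3\end{smallmatrix}\bigr)$. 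The paper does \emph{not} analyze $\mathcal L$ as a matrix operator: it completes the square a second time, absorbing the cross term into $\int\bigl(\tfrac{2\hat\varepsilon y_1}{\sqrt\eta}+\sqrt\eta z_2\bigr)^2 dx$, which is possible because the product of the two subtracted diagonal potentials, $\tfrac{4\hat\varepsilon^2}{\eta}\cdot\eta$, equals the square $(2\hat\varepsilon)^2$ of the coupling pointwise. This fully decouples the form into the scalar operators $L_1=\overline L_1-\tfrac{4\hat\varepsilon^2}{\eta}$ and $L_3=\overline L_3-\eta$, each of which has an explicit kernel element with a single node ($\hat\varepsilon_x$, resp.\ $n_x$) and an explicit negative eigenfunction ($L_1\hat\varepsilon^2=-6\sigma\hat\varepsilon^2$, $L_3(2n\hat\varepsilon)=-5\alpha\sigma\,2n\hat\varepsilon$), so scalar Sturm--Liouville theory finishes the count. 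This decoupling is the step your outline is missing, and it is the only place in the Proposition where the hypothesis $\eta=1-v^2-4\alpha\sigma>0$ is genuinely used.

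The two substitutes you propose for this step both have real gaps. Scalar oscillation theory does not transfer to systems: for a matrix Schr\"odinger operator on the line the nodal count of a kernel eigenvector does not determine the Morse index, so ``$(\hat\varepsilon',n')$ has one node, hence $n(\mathcal L)=1$'' is not a valid inference without setting up and verifying the full Maslov-index machinery, none of which is sketched. The homotopy argument fares no better: to transport the eigenvalue count of $\mathrm{diag}(\mathcal L_{11},\mathcal L_{22})$ to the coupled operator you must exclude eigenvalues crossing zero at \emph{intermediate} coupling strengths, i.e.\ you must control the kernel of every operator along the path, whereas your remark that the only zero mode of the coupled operator is $(\hat\varepsilon',n')$ controls only the endpoint. (Note also that your target $n(\mathcal L)=1$ is not what the paper's decoupling immediately yields: $L_1$ and $L_3$ each carry one negative direction, and the paper obtains a one-dimensional negative subspace only through the later choice of $N=\mathrm{span}\{(\hat\varepsilon^2,0,2n\hat\varepsilon,\cdot)\}$ together with Lemma \ref{lem3.4}.) The remaining parts of your outline --- (1), (3), and the treatment of $L_-$ --- are sound and match the paper.
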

\begin{proof}
First, by \eqref{3.8}, \eqref{2.3}-\eqref{2.5} and \eqref{3.19}, it holds that
\begin{align}\label{3.18}
H_{\omega,v}T_1'(0)\overrightarrow{ \Phi}_{\omega,v}(x)=0 \ \ \mbox{and} \ \
H_{\omega,v}T_2'(0)\overrightarrow{ \Phi}_{\omega,v}(x)=0.
\end{align}
Let
\begin{align}\label{3.19}
Z=\{kT_1'(0)\overrightarrow{ \Phi}_{\omega,v}(x)+k_2T_2'(0)\overrightarrow{ \Phi}_{\omega,v}(x) \big| k_1,k_2\in R\},
\end{align}
then by \eqref{3.18}, $Z$ is contained in the kernel of $H_{\omega,v}$.

Second, for any $\overrightarrow{\psi}\in X$, rewrite it as
\begin{align}\label{3.25}
\overrightarrow{\psi}=(\mathrm{e}^{i\frac{v}{2}x}z_1,z_2,z_3)
\end{align}
with $z_1=y_1+iy_2,\ y_1=\mbox{Re} z_1, \ y_2=\mbox{Im} z_1$,
then by \eqref{3.17} we have
\begin{align}\label{3.26}
\nonumber\langle H_{\omega,v}\overrightarrow{\psi},\overrightarrow{\psi}\rangle
=&\int-2\left(\frac{\partial^2}{\partial x^2}-\sigma-n\right)z_1^2+2\gamma\left(|\varepsilon|^2z_1^2+\hat{\varepsilon}^2z_1^2+\hat{\varepsilon}^2\overline{z}_1^2\right)\\
\nonumber&+2\hat{\varepsilon}z_2\overline{z}_1+2\hat{\varepsilon}z_2z_1
+\left(-\alpha\frac{\partial^2}{\partial x^2}+1+2\beta n\right)z_2^2+2vz_3'z_2+(z_3')^2 d x\\
\nonumber=&\int-2\left(\frac{\partial^2}{\partial x^2}-\sigma-n-3\gamma\hat{\varepsilon}^2\right)y_1^2 d x+\int-2\left(\frac{\partial^2}{\partial x^2}-\sigma-n-\gamma\hat{\varepsilon}^2\right)y_2^2 d x\\
\nonumber&+4\int2\hat{\varepsilon}z_2y_1 d x
+\left(-\alpha\frac{\partial^2}{\partial x^2}+1+2\beta n-v^2\right)z_2^2 d x+\int(vz_2+z_3')^2 d x\\
\nonumber=&\langle \overline{L}_1y_1,y_1\rangle+\langle L_2y_2,y_2\rangle+4\int2\hat{\varepsilon}z_2y_1 d x+\langle \overline{L}_3z_2,z_2\rangle+\int(vz_2+z_3')^2 d x\\
\nonumber=&\langle L_1y_1,y_1\rangle+\langle L_2y_2,y_2\rangle+\langle L_3z_2,z_2\rangle+\int(vz_2+z_3')^2 d x\\
&+\int\left(\frac{2\hat{\varepsilon}y_1}{\sqrt{\eta}}+\sqrt{\eta}z_2\right)^2 d x,
\end{align}
where $\overline{L}_1=-2\left(\frac{\partial^2}{\partial x^2}-\sigma-n-3\gamma\hat{\varepsilon}^2\right)$, $L_1=\overline{L}_1-\frac{4\hat{\varepsilon}^2}{\eta}=-2\frac{\partial^2}{\partial x^2}+2\sigma+2n+(6\gamma-\frac{4}{\eta})\hat{\varepsilon}^2$,
$L_2=-2\frac{\partial^2}{\partial x^2}+2\sigma+2n+2\gamma\hat{\varepsilon}^2$,
$\overline{L}_3=-\alpha\frac{\partial^2}{\partial x^2}+1+2\beta n-v^2$ and
$ L_3=\overline{L}_3-\eta=-\alpha\frac{\partial^2}{\partial x^2}+4\alpha\sigma+2\beta n$.
Combining \eqref{2.2} and \eqref{2.7}, one gets
\begin{align}\label{3.28}
 L_{1}\hat{\varepsilon}_x=0, \ \ L_{2}\hat{\varepsilon}=0 \ \ \mbox{and} \ \ L_{3}n_x=0,
\end{align}
and $\hat{\varepsilon}_x$ and $n_x$ have a simple zero point ar $x=0$, respectively. Thanks to the Sturm-Liouville theorem in \cite{H}, we know that $0$ is the second eigenvalue of $L_1$ and $L_3$. Thus $L_1$ and $L_3$ only have one strictly negative eigenvalue with an eigenfunction, respectively. In fact, by the simple calculation, we have
\begin{align}\label{3.29}
L_1(\hat{\varepsilon}^2)=-6\sigma(\hat{\varepsilon}^2) \ \ \mbox{and} \ \ L_3(2n\hat{\varepsilon})=-5\alpha\sigma(2n\hat{\varepsilon}).
\end{align}
We rewrite $L_1=-2\frac{\partial^2}{\partial x^2}+2\sigma+M_1(x)$,
$L_3=-\alpha\frac{\partial^2}{\partial x^2}+4\alpha\sigma+M_3(x)$
with $M_1(x)=2n+(6\gamma-\frac{4}{\eta})\hat{\varepsilon}^2\rightarrow 0$ and $M_3(x)=2\beta n\rightarrow 0$, as $ |x|\rightarrow+\infty$, then by Weyl's theorem on the essential spectrum, it follows that
\begin{align*}
\sigma_{ess}(L_1)=\left[2\sigma,+\infty\right) \ \mbox{and} \
\sigma_{ess}(L_3)=\left[4\alpha\sigma,+\infty\right), \  \ \sigma>0, \ \alpha>0.
\end{align*}
By \eqref{2.7} and \eqref{3.28}, $\hat{\varepsilon}$ has no zero point and $0$ is the first simple eigenvalue of $L_2$ with a eigenfunction $\hat{\varepsilon}(x)$. Rewrite $L_2$ as $L_2=-2\frac{\partial^2}{\partial x^2}+2\sigma+M_2(x)$ with $M_2(x)=2n+2\gamma\hat{\varepsilon}^2$, then $M_2(x)\rightarrow 0$ when $|x|\rightarrow+\infty$, thus
\begin{align*}
\sigma_{ess}(L_2)=\left[2\sigma,+\infty\right),\ \sigma>0
\end{align*}
and we complete the proof of Proposition \ref{prop}.
\end{proof}
Let us state the assumption on the spectral decomposition of $H_{\omega,v}$, which will be used to prove the orbital stability of solitary waves.
\begin{assume}\label{Assum1.1}[\cite{G-S-S1} {\bf Spectral decomposition of $H_{\omega,v}$}]
The space $X$ is decomposed as a direct sum
\begin{align}\label{3.20}
X=N+Z+P,
\end{align}
where $Z$ is defined in \eqref{3.19}, $N$ is a finite-dimensional subspace such that
\begin{align}\label{3.21}
\langle H_{\omega,v} \overrightarrow{u},\overrightarrow{u}\rangle<0, \ \mbox{for} \ 0\neq \overrightarrow{u}\in N,
\end{align}
and $P$ is a closed subspace such that
\begin{align}\label{3.22}
\langle H_{\omega,v} \overrightarrow{u},\overrightarrow{u}\rangle\geq \delta\|\overrightarrow{u}\|_X^2, \ \mbox{for} \ \overrightarrow{u}\in P
\end{align}
with some constant $\delta>0$ independent of $\overrightarrow{u}$.
\end{assume}

According to \cite{A-B}, we have the following lemmas.
\begin{lemma}\label{lem3.1}
For any real functions $y_1\in H^1({\mathbb{R}})$ satisfying
\begin{align*}
\langle y_1,\hat{\varepsilon}^2\rangle=\langle y_1,\hat{\varepsilon}_x\rangle=0,
\end{align*}
then there exists a positive number $\overline{\delta}_1,\delta_1>0$ such that $\langle L_1y_1,y_1\rangle\geq\overline{\delta}_1\|y_1\|_{L^2}^2$. Moreover, we have
\begin{align*}
\langle L_1y_1,y_1\rangle\geq\delta_1\|y_1\|_{H^1}^2.
\end{align*}
\end{lemma}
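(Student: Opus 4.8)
The plan is to read off both inequalities from the detailed spectral picture of $L_1$ already assembled in Proposition \ref{prop} and its proof. Recall that $L_1=-2\partial_{xx}+2\sigma+2n+(6\gamma-\tfrac{4}{\eta})\hat{\varepsilon}^2$ is self-adjoint on $L^2(\mathbb{R})$ with form domain $H^1(\mathbb{R})$, that its essential spectrum is $[2\sigma,+\infty)$ with $\sigma>0$, and that below the essential spectrum it carries exactly two discrete eigenvalues: the simple negative eigenvalue $-6\sigma$ with positive (even) eigenfunction $\hat{\varepsilon}^2$ by \eqref{3.29}, and the eigenvalue $0$ with (odd) eigenfunction $\hat{\varepsilon}_x$ by \eqref{3.28}, which the Sturm–Liouville node count identifies as the \emph{second} eigenvalue. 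Hence there is a spectral gap above zero: the smallest point $\overline{\delta}_1$ of $\sigma(L_1)$ lying strictly above $0$ (either a third discrete eigenvalue or the bottom $2\sigma$ of the essential spectrum) is positive.

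First I would establish the $L^2$ coercivity. Since $\hat{\varepsilon}^2$ and $\hat{\varepsilon}_x$ are eigenfunctions for distinct eigenvalues they are automatically $L^2$-orthogonal (one is even, the other odd), so the two hypotheses $\langle y_1,\hat{\varepsilon}^2\rangle=\langle y_1,\hat{\varepsilon}_x\rangle=0$ mean precisely that $y_1$ carries no component in the non-positive spectral subspace $\mathrm{span}\{\hat{\varepsilon}^2,\hat{\varepsilon}_x\}$. Writing $y_1$ through the spectral resolution of $L_1$, whose spectral measure is supported in $\{-6\sigma\}\cup\{0\}\cup[\overline{\delta}_1,+\infty)$, the two orthogonality relations annihilate the atoms at $-6\sigma$ and at $0$, so $\langle L_1y_1,y_1\rangle\geq\overline{\delta}_1\|y_1\|_{L^2}^2$. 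This is exactly the positivity statement furnished by the Albert–Bona framework cited as \cite{A-B}.

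Next I would upgrade this to the $H^1$ bound by the standard interpolation trick. Using the explicit form of $L_1$, one has $\langle L_1y_1,y_1\rangle=2\|y_{1x}\|_{L^2}^2+\int\bigl(2\sigma+2n+(6\gamma-\tfrac{4}{\eta})\hat{\varepsilon}^2\bigr)y_1^2\,dx\geq 2\|y_{1x}\|_{L^2}^2-C\|y_1\|_{L^2}^2$, since the potential $V=2n+(6\gamma-\tfrac{4}{\eta})\hat{\varepsilon}^2$ is a bounded $\mathrm{sech}^2$-profile decaying at infinity. I would then split $\langle L_1y_1,y_1\rangle=\theta\langle L_1y_1,y_1\rangle+(1-\theta)\langle L_1y_1,y_1\rangle$ for a parameter $\theta\in(0,1)$, bound the first copy below by $\theta\overline{\delta}_1\|y_1\|_{L^2}^2$ and the second by $(1-\theta)(2\|y_{1x}\|_{L^2}^2-C\|y_1\|_{L^2}^2)$, and choose $\theta$ close enough to $1$ that $\theta\overline{\delta}_1-(1-\theta)C>0$. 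This produces $\langle L_1y_1,y_1\rangle\geq\delta_1\|y_1\|_{H^1}^2$ with $\delta_1=\min\{\theta\overline{\delta}_1-(1-\theta)C,\ 2(1-\theta)\}>0$.

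The delicate point, which is precisely why \cite{A-B} is invoked, is the $L^2$ coercivity itself: one must be certain that removing exactly the two directions $\hat{\varepsilon}^2$ and $\hat{\varepsilon}_x$ lands $y_1$ strictly inside the positive part of the spectrum, i.e. that $0$ is genuinely only the second eigenvalue and that no further zero mode hides below $2\sigma$. This rests on the Sturm–Liouville count already used in Proposition \ref{prop} (the eigenfunction $\hat{\varepsilon}_x$ has a single simple zero, so it is the second eigenfunction and $-6\sigma$ is the only negative eigenvalue); granting this, the $H^1$ upgrade is entirely routine.
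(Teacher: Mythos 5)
Your proof is correct and follows exactly the route the paper itself relies on: the paper gives no argument for this lemma beyond citing \cite{A-B} together with the spectral facts about $L_1$ established in Proposition \ref{prop} (simple ground state $-6\sigma$ with eigenfunction $\hat{\varepsilon}^2$, simple second eigenvalue $0$ with eigenfunction $\hat{\varepsilon}_x$, essential spectrum $[2\sigma,+\infty)$), and your spectral-resolution argument for the $L^2$ bound followed by the convex-combination upgrade to $H^1$ is the standard proof those ingredients are meant to stand for. In effect you have supplied the details the paper omits, and correctly identified the one genuinely delicate point (that orthogonality to exactly these two directions places $y_1$ in the positive spectral subspace, which hinges on the simplicity of the two non-positive eigenvalues).
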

\begin{lemma}\label{lem3.2}
For any real functions $y_2\in H^1({\mathbb{R}})$ satisfying
\begin{align*}
\langle y_2,\hat{\varepsilon}\rangle=0,
\end{align*}
then there exists a positive number $\delta_2>0$ such that
\begin{align*}
\langle L_2y_2,y_2\rangle\geq\delta_2\|y_2\|_{H^1}^2.
\end{align*}
\end{lemma}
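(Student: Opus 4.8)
The plan is to exploit the spectral information about $L_2$ already obtained in Proposition \ref{prop}: that $L_2$ is a self-adjoint Schr\"odinger-type operator whose lowest eigenvalue is $0$, that this eigenvalue is simple with the strictly positive eigenfunction $\hat{\varepsilon}$ (cf. \eqref{3.28}), and that $\sigma_{ess}(L_2)=[2\sigma,+\infty)$ with $\sigma>0$. The strategy is the standard two-step coercivity argument: first establish an $L^2$-coercivity estimate on the orthogonal complement of the kernel using the spectral gap, and then upgrade it to the desired $H^1$-estimate using the boundedness of the potential.

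First I would record the spectral gap. Because $0$ is the bottom of $\sigma(L_2)$, is simple, and is isolated from the rest of the spectrum (there is nothing below it since $\hat{\varepsilon}$ has no zeros, and the essential spectrum only begins at $2\sigma>0$), the quantity $\lambda_1:=\inf\big(\sigma(L_2)\setminus\{0\}\big)$ is strictly positive. By the spectral theorem, any real $y_2\in H^1(\mathbb{R})$ lying in the orthogonal complement of $\ker L_2=\mathrm{span}\{\hat{\varepsilon}\}$, i.e. satisfying $\langle y_2,\hat{\varepsilon}\rangle=0$, obeys
\[
\langle L_2 y_2,y_2\rangle\geq \lambda_1\|y_2\|_{L^2}^2 .
\]

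The second step upgrades this $L^2$-bound to the $H^1$-bound. Writing out the quadratic form, $\langle L_2 y_2,y_2\rangle=2\|y_{2x}\|_{L^2}^2+\int_{\mathbb{R}}V\,y_2^2\,dx$ with $V=2\sigma+2n+2\gamma\hat{\varepsilon}^2$. Since $n_{\omega,v}$ and $\hat{\varepsilon}$ in \eqref{2.7} are bounded, $V\in L^\infty(\mathbb{R})$, so $\int_{\mathbb{R}} V y_2^2\,dx\geq -C\|y_2\|_{L^2}^2$ with $C=\|V\|_{L^\infty}$, whence $2\|y_{2x}\|_{L^2}^2\leq \langle L_2 y_2,y_2\rangle+C\|y_2\|_{L^2}^2$. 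I would then form a convex combination of the two estimates: for $\mu\in(0,1)$,
\[
\langle L_2 y_2,y_2\rangle\geq 2\mu\|y_{2x}\|_{L^2}^2+\big((1-\mu)\lambda_1-\mu C\big)\|y_2\|_{L^2}^2 ,
\]
and choosing $\mu$ small enough that $(1-\mu)\lambda_1-\mu C>0$ (any $\mu<\lambda_1/(\lambda_1+C)$ works) yields the claim with $\delta_2=\min\{2\mu,(1-\mu)\lambda_1-\mu C\}$.

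The argument is essentially routine once the spectral picture is in place, so there is no serious obstacle; the only point requiring care is the justification of the spectral gap, namely that $0$ is genuinely isolated from the remainder of $\sigma(L_2)$. This is guaranteed precisely because $0$ is the first (hence lowest) simple eigenvalue, established via the Sturm--Liouville argument and the positivity of $\hat{\varepsilon}$ in Proposition \ref{prop}, while Weyl's theorem pushes the essential spectrum up to $2\sigma>0$. I note that this proof is simpler than that of Lemma \ref{lem3.1}: since $L_2$ is nonnegative with a one-dimensional kernel, the single orthogonality condition $\langle y_2,\hat{\varepsilon}\rangle=0$ suffices, whereas $L_1$ possesses a strictly negative eigenvalue and therefore requires two orthogonality conditions.
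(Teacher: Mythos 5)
Your argument is correct, and it is worth noting that the paper itself offers no proof of Lemma \ref{lem3.2} at all: Lemmas \ref{lem3.1}--\ref{lem3.3} are simply attributed to the total-positivity results of Albert and Bona \cite{A-B}. Your proof therefore supplies a self-contained justification where the paper relies on a citation. The route you take is the natural one given what Proposition \ref{prop} establishes: since $\hat{\varepsilon}>0$ is an eigenfunction of the Schr\"odinger operator $L_2$ with eigenvalue $0$, the ground-state characterization makes $0$ the simple bottom of the spectrum; Weyl's theorem places $\sigma_{ess}(L_2)=[2\sigma,+\infty)$ with $\sigma>0$, so $\lambda_1=\inf\bigl(\sigma(L_2)\setminus\{0\}\bigr)>0$ and the spectral theorem gives $\langle L_2y_2,y_2\rangle\geq\lambda_1\|y_2\|_{L^2}^2$ on $\{\hat{\varepsilon}\}^{\perp}$; the convex-combination trick with the bounded potential $V=2\sigma+2n+2\gamma\hat{\varepsilon}^2$ then upgrades this to the $H^1$ bound. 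Each step is sound, and your closing observation correctly identifies why a single orthogonality condition suffices here while $L_1$ (which has a negative eigenvalue) needs two. What the citation-based approach buys the authors is brevity and uniform treatment of all three lemmas via known results for $\mathrm{sech}^2$-type potentials; what your argument buys is transparency and independence from \cite{A-B}, at the cost of having to invoke the ground-state positivity argument explicitly. The one point I would tighten is the phrase ``there is nothing below it since $\hat{\varepsilon}$ has no zeros'': state explicitly that a strictly positive $L^2$ eigenfunction of a Schr\"odinger operator must be the ground state (e.g., because the ground state can be chosen nonnegative and two nonnegative nontrivial functions cannot be orthogonal), since that is the lemma doing the real work in ruling out spectrum below $0$.
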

\begin{lemma}\label{lem3.3}
For any real functions $z_2\in H^1({\mathbb{R}})$ satisfying
\begin{align*}
\langle z_2,2n\hat{\varepsilon}\rangle=\langle z_2,n_x\rangle=0,
\end{align*}
then there exists a positive number $\overline{\delta}_3,\delta_3>0$ such that $\langle L_3z_2,z_2\rangle\geq\overline{\delta}_3\|z_2\|_{L^2}^2$. Moreover, we have
\begin{align*}
\langle L_3z_2,z_2\rangle\geq\delta_3\|z_2\|_{H^1}^2.
\end{align*}
\end{lemma}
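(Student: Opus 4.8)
The plan is to exploit the spectral structure of $L_3$ already recorded in the proof of Proposition \ref{prop} and then follow the two-step scheme standard for such one-dimensional Schr\"odinger operators: first an $L^2$-coercivity estimate on the admissible subspace, then an interpolation upgrade to $H^1$. Recall that $L_3=-\alpha\frac{\partial^2}{\partial x^2}+4\alpha\sigma+2\beta n$ is self-adjoint with potential $2\beta n\to 0$ at infinity, so $\sigma_{ess}(L_3)=[4\alpha\sigma,+\infty)$ with $4\alpha\sigma>0$. Below the essential spectrum, Sturm-Liouville theory gives simple eigenvalues with eigenfunctions ordered by their number of zeros: the ground state $2n\hat{\varepsilon}\propto\operatorname{sech}^3(\sqrt{\sigma}x)$ has no zero and, by \eqref{3.29}, corresponds to the unique negative eigenvalue $-5\alpha\sigma$, while $n_x$ has exactly one zero and, by \eqref{3.28}, corresponds to the second eigenvalue, which equals $0$ and is simple and isolated.

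First I would establish the $L^2$-coercivity. Since $L_3$ is self-adjoint, $L^2(\mr)$ splits orthogonally into its non-positive and strictly positive spectral subspaces, and by the above the non-positive subspace is exactly the two-dimensional space $\operatorname{span}\{2n\hat{\varepsilon},\,n_x\}$. The two constraints $\langle z_2,2n\hat{\varepsilon}\rangle=\langle z_2,n_x\rangle=0$ therefore place $z_2$ in the strictly positive spectral subspace. Because $0$ is isolated in $\sigma(L_3)$ (separated from $-5\alpha\sigma$ below, and from above by either a third positive discrete eigenvalue or the bottom $4\alpha\sigma$ of $\sigma_{ess}$), the number $\overline{\delta}_3:=\operatorname{dist}\bigl(0,\sigma(L_3)\cap(0,+\infty)\bigr)$ is strictly positive. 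The spectral theorem, equivalently the min-max characterisation on the orthogonal complement of the first two eigenfunctions, then yields $\langle L_3 z_2,z_2\rangle\ge\overline{\delta}_3\|z_2\|_{L^2}^2$, which is the first assertion.

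Second, I would upgrade this to the $H^1$-bound by interpolation. Writing $\langle L_3 z_2,z_2\rangle=\alpha\|z_{2x}\|_{L^2}^2+\int_{\mr}(4\alpha\sigma+2\beta n)z_2^2\,\md x$ and using that $n$ is bounded, one obtains a G\r{a}rding-type inequality $\langle L_3 z_2,z_2\rangle\ge\alpha\|z_{2x}\|_{L^2}^2-C\|z_2\|_{L^2}^2$ for some $C>0$. Forming the convex combination $\theta\langle L_3z_2,z_2\rangle+(1-\theta)\langle L_3z_2,z_2\rangle$ and inserting the two bounds gives $\langle L_3 z_2,z_2\rangle\ge\theta\alpha\|z_{2x}\|_{L^2}^2+\bigl[(1-\theta)\overline{\delta}_3-\theta C\bigr]\|z_2\|_{L^2}^2$; choosing $\theta>0$ small enough that $(1-\theta)\overline{\delta}_3-\theta C>0$ makes both coefficients positive and produces $\langle L_3z_2,z_2\rangle\ge\delta_3\|z_2\|_{H^1}^2$ with $\delta_3=\min\{\theta\alpha,\,(1-\theta)\overline{\delta}_3-\theta C\}$.

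The delicate point, and the only place where genuine work is needed, is the $L^2$-coercivity: one must be certain that the two explicit orthogonality conditions remove the \emph{entire} non-positive part of the spectrum and that a true gap separates $0$ from the rest of the positive spectrum. Both facts hinge on the simplicity and isolation of the eigenvalues $-5\alpha\sigma$ and $0$ supplied by Sturm-Liouville theory; once these are in hand the estimate is essentially spectral bookkeeping, and the $H^1$ upgrade is routine. Since Lemma \ref{lem3.1} has the identical structure, with $L_1$, its negative eigenfunction $\hat{\varepsilon}^2$ and kernel $\hat{\varepsilon}_x$, and the gap above its zero eigenvalue, I would present Lemma \ref{lem3.3} in parallel with it.
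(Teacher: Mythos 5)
Your argument is correct and is exactly the standard spectral reasoning that the paper itself does not write out but delegates to the cited reference [A-B]: since $2n\hat{\varepsilon}$ and $n_x$ are precisely the eigenfunctions of the only two non-positive (simple, isolated) spectral points $-5\alpha\sigma$ and $0$ of $L_3$, the two orthogonality conditions put $z_2$ in the positive spectral subspace, giving the $L^2$ bound, and your G\r{a}rding-type convex-combination step to upgrade to $H^1$ is the usual one. No gaps; this matches the approach the paper relies on.
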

For any $\overrightarrow{\psi}\in X$, from \eqref{3.25}, we can simply denote $\overrightarrow{\psi}$ by $\overrightarrow{\psi}=(y_1,y_2,z_2,z_3)$.
Choose $y_1^{-}=\hat{\varepsilon}^2$, $y_2^{-}=0$, $z_2^{-}=2n\hat{\varepsilon}$ and $z_{3x}^{-}=-vz_2^{-}$, $\overrightarrow{\psi}^{-}=(y_1^{-},y_2^{-},z_2^{-},z_3^{-})$, then by \eqref{3.26} and \eqref{3.29}, we obtain
\begin{align*}
\langle H_{\omega,v}\overrightarrow{\psi}^{-},\overrightarrow{\psi}^{-}\rangle
=-6\sigma\langle\hat{\varepsilon}^2,\hat{\varepsilon}^2\rangle-
5\alpha\sigma\langle2n\hat{\varepsilon},2n\hat{\varepsilon}\rangle<0.
\end{align*}
Without loss of generality, let
\begin{align*}
N=\{k\overrightarrow{\psi}^{-}/k\in\mathbb{R}\},
\end{align*}
then it implies \eqref{3.21}. In addition, one gets that the kernel of $H_{\omega,v}$ is spanned by the following two vectors
\begin{align*}
\overrightarrow{\psi}_{0,1}=(\hat{\varepsilon}_x,0,n_x,\phi_x) \ \ \mbox{and} \ \
\overrightarrow{\psi}_{0,2}=(0,\hat{\varepsilon},0,0),
\end{align*}
then $Z$ can be rewritten as
\begin{align*}
Z=\{k_1\overrightarrow{\psi}_{0,1}+k_2\overrightarrow{\psi}_{0,2}/k_1,k_2\in\mathbb{R}\}.
\end{align*}
Choosing
\begin{align*}
P=\{\overrightarrow{p}\in X/\overrightarrow{p}=(p_1,p_2,p_3,p_4)\},
\end{align*}
with $\langle p_1,\hat{\varepsilon}^2\rangle+\langle p_3,2n\hat{\varepsilon}\rangle=0$, $\langle p_1,\hat{\varepsilon}_x\rangle+\langle p_3,n_x\rangle=0$ and $\langle p_2,\hat{\varepsilon}\rangle=0$. We will prove that \eqref{3.20} holds. Namely, for any $\overrightarrow{u}=(y_1,y_2,z_2,z_3)\in X$, we choose $$a_1=\frac{\langle y_1,\hat{\varepsilon}^2\rangle+\langle z_2,2n\hat{\varepsilon}\rangle}{\langle \hat{\varepsilon}^2,\hat{\varepsilon}^2\rangle+\langle 2n\hat{\varepsilon},2n\hat{\varepsilon}\rangle},\ b_1=\frac{\langle y_1,\hat{\varepsilon}_x\rangle+\langle z_2,n_x\rangle}{\langle \hat{\varepsilon}_x,\hat{\varepsilon}_x\rangle+\langle n_x,n_x\rangle} \ \mbox{and} \ b_2=\frac{\langle y_2,\hat{\varepsilon}\rangle}{\langle \hat{\varepsilon},\hat{\varepsilon}\rangle}$$ such that $\overrightarrow{u}$ can be uniquely represented by
\begin{align*}
\overrightarrow{u}=a_1\overrightarrow{\psi}^{-}+b_1\overrightarrow{\psi}_{0,1}+b_2\overrightarrow{\psi}_{0,2}+\overrightarrow{p},
\end{align*}
with $\overrightarrow{p}\in P$. For subspace $P$, we have the following lemma, which implies that \eqref{3.22} holds.
\begin{lemma}\label{lem3.4}
For any $\overrightarrow{p}\in P$, there exists a constant $\delta>0$ such that
\begin{align}\label{3.46}
\langle H_{\omega,v}\overrightarrow{p},\overrightarrow{p}\rangle\geq \delta\|\overrightarrow{p}\|_{X}^2
\end{align}
with $\delta$ independent of $\overrightarrow{p}$.
\end{lemma}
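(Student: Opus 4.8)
The plan is to start from the quadratic-form identity \eqref{3.26}, which for $\overrightarrow{p}=(p_1,p_2,p_3,p_4)\in P$ reads
\[
\langle H_{\omega,v}\overrightarrow{p},\overrightarrow{p}\rangle=\langle L_1p_1,p_1\rangle+\langle L_2p_2,p_2\rangle+\langle L_3p_3,p_3\rangle+\int(vp_3+p_4')^2\,dx+\int\Big(\tfrac{2\hat{\varepsilon}p_1}{\sqrt{\eta}}+\sqrt{\eta}\,p_3\Big)^2 dx,
\]
so that the last two integrals are manifestly nonnegative and only $L_1,L_2,L_3$ carry indefinite contributions. The $p_2$-direction is handled immediately: the third defining condition of $P$ is $\langle p_2,\hat{\varepsilon}\rangle=0$, so Lemma \ref{lem3.2} gives $\langle L_2p_2,p_2\rangle\geq\delta_2\|p_2\|_{H^1}^2$. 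The real work is the coupled block in $(p_1,p_3)$.

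For that block I would split off the non-positive spectral directions. By \eqref{3.28}--\eqref{3.29}, $L_1$ has the single negative eigenfunction $\hat{\varepsilon}^2$ and kernel $\hat{\varepsilon}_x$, while $L_3$ has the single negative eigenfunction $2n\hat{\varepsilon}$ and kernel $n_x$, and by parity the relevant pairs are $L^2$-orthogonal. Write $p_1=c_1\hat{\varepsilon}^2+c_2\hat{\varepsilon}_x+w_1$ and $p_3=d_1(2n\hat{\varepsilon})+d_2n_x+w_3$ with $w_1\perp\hat{\varepsilon}^2,\hat{\varepsilon}_x$ and $w_3\perp 2n\hat{\varepsilon},n_x$. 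The cross terms vanish, so $\langle L_1p_1,p_1\rangle=-6\sigma c_1^2\|\hat{\varepsilon}^2\|_{L^2}^2+\langle L_1w_1,w_1\rangle$ and $\langle L_3p_3,p_3\rangle=-5\alpha\sigma d_1^2\|2n\hat{\varepsilon}\|_{L^2}^2+\langle L_3w_3,w_3\rangle$, and Lemmas \ref{lem3.1}, \ref{lem3.3} yield $\langle L_1w_1,w_1\rangle\geq\delta_1\|w_1\|_{H^1}^2$, $\langle L_3w_3,w_3\rangle\geq\delta_3\|w_3\|_{H^1}^2$. The two remaining defining conditions of $P$ become $c_1\|\hat{\varepsilon}^2\|_{L^2}^2+d_1\|2n\hat{\varepsilon}\|_{L^2}^2=0$ and $c_2\|\hat{\varepsilon}_x\|_{L^2}^2+d_2\|n_x\|_{L^2}^2=0$; that is, they are precisely orthogonality to the negative direction $(\hat{\varepsilon}^2,2n\hat{\varepsilon})$ and to the kernel direction $(\hat{\varepsilon}_x,n_x)$ of the reduced operator $F=\bigl(\begin{smallmatrix}\overline{L}_1&2\hat{\varepsilon}\\2\hat{\varepsilon}&\overline{L}_3\end{smallmatrix}\bigr)$ that generates the $(p_1,p_3)$-block.

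I would then close the estimate by showing the leftover negative contribution $-6\sigma c_1^2\|\hat{\varepsilon}^2\|_{L^2}^2-5\alpha\sigma d_1^2\|2n\hat{\varepsilon}\|_{L^2}^2$ is dominated by the nonnegative cross-square. Using the solitary-wave identity $\hat{\varepsilon}^2=-\eta n$ (read off from \eqref{2.7}), the component of $\frac{2\hat{\varepsilon}p_1}{\sqrt{\eta}}+\sqrt{\eta}p_3$ along $2n\hat{\varepsilon}$ equals $(d_1-c_1)\sqrt{\eta}\,(2n\hat{\varepsilon})$, contributing $(d_1-c_1)^2\eta\|2n\hat{\varepsilon}\|_{L^2}^2$; the first coupling condition forces $d_1-c_1$ to be a fixed nonzero multiple of $c_1$, so the net coefficient of $c_1^2$ is strictly positive (this is exactly where Proposition \ref{prop}, which gives that $H_{\omega,v}$ has only one negative eigenvalue, is used: it guarantees the constrained finite-dimensional form is positive definite). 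Finally $p_4$ enters $\|\cdot\|_X$ only through the $\phi$-component and is recovered from $\int(vp_3+p_4')^2\,dx$ once $p_3$ is controlled; collecting the coercive contributions from $w_1,w_3,p_2$ and the finite-dimensional coefficients yields $\langle H_{\omega,v}\overrightarrow{p},\overrightarrow{p}\rangle\geq\delta\|\overrightarrow{p}\|_X^2$.

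The main obstacle is the constrained positivity of the coupled $(p_1,p_3)$-block: the diagonal part $\mathrm{diag}(L_1,L_3)$ exhibits two negative directions, whereas $P$ imposes only one genuinely coupling condition (alongside the kernel and $p_2$ conditions). Reconciling ``two negative directions of the diagonal part'' with the single negative eigenvalue of $H_{\omega,v}$ is precisely the role of the off-diagonal coupling $2\hat{\varepsilon}$, i.e. of the cross-square term; the delicate point is absorbing the cross terms between the spectral remainders $w_1,w_3$ and the finite-dimensional part inside that square without spoiling the compensation, and the identity $\hat{\varepsilon}^2=-\eta n$ together with the eigenvalue relations \eqref{3.29} are what make this bookkeeping close.
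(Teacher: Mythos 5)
Your route through the $(p_1,p_3)$ block is genuinely different from the paper's. The paper's own proof is far more direct: it passes from \eqref{3.26} to \eqref{3.47} by applying Lemmas \ref{lem3.1}--\ref{lem3.3} \emph{componentwise} to $p_1$, $p_2$, $p_3$, keeps the two squares as nonnegative leftovers, and then runs two elementary dichotomies ($\|p_{4x}\|_{L^2}$ versus $\|p_3\|_{L^2}$ as in \eqref{3.48}--\eqref{3.49}, and then $\|p_3\|_{L^2}$ versus $\|p_1\|_{L^2}$) to extract control of $p_4$ and conclude. You have correctly noticed that this componentwise application is not literally licensed by the definition of $P$, whose first two constraints only fix the \emph{sums} $\langle p_1,\hat{\varepsilon}^2\rangle+\langle p_3,2n\hat{\varepsilon}\rangle$ and $\langle p_1,\hat{\varepsilon}_x\rangle+\langle p_3,n_x\rangle$, not the individual pairings required by Lemmas \ref{lem3.1} and \ref{lem3.3}; your spectral splitting $p_1=c_1\hat{\varepsilon}^2+c_2\hat{\varepsilon}_x+w_1$, $p_3=d_1(2n\hat{\varepsilon})+d_2n_x+w_3$ is the honest way to engage with that, and your identification of $\hat{\varepsilon}^2=-\eta n$ and of the coefficient $(d_1-c_1)\sqrt{\eta}$ in the cross-square is correct as far as it goes.

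However, the proposal has a genuine gap at exactly the step that carries all the weight. First, the projection of $\tfrac{2\hat{\varepsilon}p_1}{\sqrt{\eta}}+\sqrt{\eta}p_3$ onto $2n\hat{\varepsilon}$ is not $(d_1-c_1)\sqrt{\eta}$ alone: the remainder $w_1$ contributes $\tfrac{2}{\sqrt{\eta}}\langle \hat{\varepsilon}w_1,2n\hat{\varepsilon}\rangle$, and $w_1\perp\hat{\varepsilon}^2$ does not force $\hat{\varepsilon}w_1\perp n\hat{\varepsilon}$, so this term can cancel part of the compensation; absorbing it costs a fraction of $\delta_1\|w_1\|^2$ and changes the constants. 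Second, even ignoring that, what you must verify is a concrete numerical inequality: with $\rho=\|\hat{\varepsilon}^2\|_{L^2}^2/\|2n\hat{\varepsilon}\|_{L^2}^2$ the constraint gives $d_1=-\rho c_1$, and positivity of the finite-dimensional part requires $\eta(1+\rho)^2>\sigma\rho(6+5\alpha\rho)$ (from \eqref{3.29}), which depends on $\alpha,\beta,\omega,v$ and is nowhere checked. Appealing to Proposition \ref{prop}(2) to guarantee it is circular: within the Grillakis--Shatah--Strauss framework the statement $n(H_{\omega,v})=1$ is precisely what the decomposition \eqref{3.20} with coercivity on $P$ is meant to establish, and the proof of Proposition \ref{prop} in the paper only exhibits one negative direction for each of $L_1$ and $L_3$ separately, which by itself is consistent with $H_{\omega,v}$ having two negative directions. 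So the ``delicate point'' you flag is not bookkeeping; it is the theorem, and the proposal as written does not prove it.
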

\begin{proof}
For any $\overrightarrow{p}\in P$, by \eqref{3.26} and Lemmas \ref{lem3.1}-\ref{lem3.3}, we have
\begin{align}\label{3.47}
\nonumber\langle H_{\omega,v}\overrightarrow{p},\overrightarrow{p}\rangle\geq&\delta_1\|p_1\|_{H^1}^2
+\delta_2\|p_2\|_{H^1}^2+\delta_3\|p_3\|_{H^1}^2+\int(vp_3+p_{4x})^2 d x\\
&+\int\left(\frac{2\hat{\varepsilon}p_1}{\sqrt{\eta}}+\sqrt{\eta}p_3\right)^2 d x.
\end{align}
(1) If $\|p_{4x}\|_{L^2}\geq2|v|\|p_3\|_{L^2}$, then
\begin{align}\label{3.48}
\int(vp_3+p_{4x})^2 d x\geq\frac{1}{4}\|p_{4x}\|_{L^2}^2\geq\delta\|p_4\|_{H^1}^2.
\end{align}
(2) If $\|p_{4x}\|_{L^2}\leq2|v|\|p_3\|_{L^2}$, then
\begin{align}\label{3.49}
\delta_3\|p_3\|_{L^2}^2\geq\frac{\delta_3}{2}\|p_3\|_{L^2}^2+\frac{\delta_3}{4|v|}\|p_{4x}\|_{L^2}^2
\geq\frac{\delta_3}{2}\|p_3\|_{L^2}^2+\frac{\delta\delta_3}{4|v|}\|p_{4}\|_{H^1}^2.
\end{align}
Thus, for any $\overrightarrow{p}\in P$, it follows from \eqref{3.48}-\eqref{3.49} that
\begin{align}\label{3.50}
\nonumber\langle H_{\omega,v}\overrightarrow{p},\overrightarrow{p}\rangle\geq&\delta_1\|p_1\|_{H^1}^2
+\overline{\delta}_2\|p_2\|_{H^1}^2+\overline{\delta}_3\|p_3\|_{H^1}^2+\overline{\delta}_4\|p_4\|_{H^1}^2\\
&+\int\left(\frac{2\hat{\varepsilon}p_1}{\sqrt{\eta}}+\sqrt{\eta}p_3\right)^2 d x.
\end{align}
(1) If $\|p_3\|_{L^2}\geq 2M\|p_1\|_{L^2}$, $M=\frac{2\|\hat{\varepsilon}\|_{L^{\infty}}}{\eta}$, then
\begin{align}\label{3.51}
\int_{\mathbb{R}}\left(\frac{2\hat{\varepsilon} p_1}{\sqrt{\eta}}
+\sqrt{\eta}p_3\right)^2 d x\geq\frac{\eta}{4}\|p_3\|_{L^2}^2.
\end{align}
(2) If $\|p_3\|_{L^2}\leq 2M\|p_1\|_{L^2}$, then
\begin{align}\label{3.52}
\delta_1\|p_1\|^2_{H^1}\geq\frac{\delta_1}{2}\|p_1\|^2_{H^1}+\frac{\delta_1}{8M^2}\|p_3\|_{L^2}^2.
\end{align}
Thus, for any $\overrightarrow{p}\in P$, we arrive at
\begin{align}\label{3.53}
\langle H_{\omega,v}\overrightarrow{p},\overrightarrow{p}\rangle\geq&\delta\|\overrightarrow{p}\|_{X}^2,
\end{align}
and the proof of Lemma \ref{lem3.4} is finished.
\end{proof}
Therefore, the Assumption \ref{Assum1.1} holds. Denote the function $d(\omega,v):\mathbb{R}\times\mathbb{R}\rightarrow\mathbb{R}$ by
\begin{align*}
d(\omega,v)=E(\overrightarrow{\Phi}_{\omega,v})-vQ_1(\overrightarrow{\Phi}_{\omega,v})-\omega Q_2(\overrightarrow{\Phi}_{\omega,v}),
\end{align*}
and $d''(\omega,v)$ is the Hessian matrix of the function $d(\omega,v)$. Along the lines of proofs in [\cite{G-S-S2}, section 3 and 4], we shall apply the abstract stability theory to prove the orbital stability of solitary waves $T_1(vt)T_2(\omega t)\overrightarrow{\Phi}_{\omega,v}(x)$ to system \eqref{3.1}.
\begin{thm}\label{thm2.1}[{\bf Abstract Stability Theorem} \cite{G-S-S1,G-S-S2}]
Assume that there exists the solitary wave solutions $T_1(vt)T_2(\omega t)\overrightarrow{\Phi}_{\omega,v}(x)$ of \eqref{3.1} and Assumption \ref{Assum1.1} holds. Let $n(H_{\omega,v})$ be the number of negative eigenvalues of $H_{\omega,v}$. Assume $d(\omega,v)$ is non-degenerate at $(\omega,v)$ and let $p(d'')$ be the number of positive eigenvalue of $d''$. If $p(d'')=n(H_{\omega,v})$, then the solitary wave solutions $T_1(vt)T_2(\omega t)\overrightarrow{\Phi}_{\omega,v}(x)$ is orbitally stable.
\end{thm}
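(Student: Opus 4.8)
The plan is to apply the Grillakis--Shatah--Strauss framework directly: since the hypothesis already supplies the equality $p(d'')=n(H_{\omega,v})$, the work is to construct a coercive Lyapunov functional and run the standard stability argument. The natural candidate is
\[
\mathcal{L}(\overrightarrow{u})=E(\overrightarrow{u})-vQ_1(\overrightarrow{u})-\omega Q_2(\overrightarrow{u}),
\]
which by \eqref{3.15} has $\overrightarrow{\Phi}_{\omega,v}$ as a critical point and whose second variation at $\overrightarrow{\Phi}_{\omega,v}$ is exactly $H_{\omega,v}$. Because $E$, $Q_1$, $Q_2$ are each conserved along the flow of \eqref{3.1} and invariant under $T_1,T_2$, the quantity $\mathcal{L}$ is conserved in time and constant along the two-parameter orbit $\{T_1(s_1)T_2(s_2)\overrightarrow{\Phi}_{\omega,v}\}$. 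The goal is therefore to upgrade the indefinite quadratic form $\langle H_{\omega,v}\cdot,\cdot\rangle$ into a genuine lower bound on a tube around this orbit.

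First I would fix the geometry of the orbit. Using the implicit function theorem, for $\overrightarrow{u}$ in a small $X$-neighborhood of the orbit I would select modulation parameters $s_1(\overrightarrow{u}),s_2(\overrightarrow{u})$ so that the shifted function
\[
\overrightarrow{w}:=T_1(-s_1)T_2(-s_2)\overrightarrow{u}-\overrightarrow{\Phi}_{\omega,v}
\]
is orthogonal to the kernel directions $\overrightarrow{\psi}_{0,1},\overrightarrow{\psi}_{0,2}$ spanning $Z$; this is possible because these directions are precisely $T_1'(0)\overrightarrow{\Phi}_{\omega,v}$ and $T_2'(0)\overrightarrow{\Phi}_{\omega,v}$, transverse to the group action. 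On this slice the decomposition $X=N\oplus Z\oplus P$ of Assumption \ref{Assum1.1} applies, and the content of the theorem's hypothesis is that the negative subspace $N$, of dimension $n(H_{\omega,v})$, is annihilated once one additionally imposes the constraints $Q_1(\overrightarrow{u})=Q_1(\overrightarrow{\Phi}_{\omega,v})$ and $Q_2(\overrightarrow{u})=Q_2(\overrightarrow{\Phi}_{\omega,v})$ coming from the conservation laws.

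Next I would make the eigenvalue count explicit for the SB-system. From Proposition \ref{prop} and the construction of $N$, one reads off $n(H_{\omega,v})=1$. To evaluate $p(d'')$ I would compute $d(\omega,v)$ in closed form by substituting the explicit profiles \eqref{2.7} into $E-vQ_1-\omega Q_2$, reducing everything to elementary $\mathrm{sech}$- and $\tanh$-integrals expressed through $\sigma=-\omega-\tfrac{v^2}{4}$ and $\eta=1-v^2-4\alpha\sigma$; differentiating the resulting explicit function of $(\omega,v)$ twice yields the $2\times2$ Hessian $d''$. Non-degeneracy is the requirement $\det d''\neq0$, and the equality $p(d'')=n(H_{\omega,v})=1$ is verified by showing $d''$ has signature $(+,-)$, i.e. $\det d''<0$. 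With the hypothesis of the abstract theorem thereby confirmed, the Grillakis--Shatah--Strauss lemma delivers the coercivity estimate $\mathcal{L}(\overrightarrow{u})-\mathcal{L}(\overrightarrow{\Phi}_{\omega,v})\ge C\,\|\overrightarrow{w}\|_X^2$ on the constrained slice.

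The concluding step is a standard contradiction argument: were stability to fail, there would exist data converging to the orbit whose solutions leave an $\epsilon$-tube; conservation of $\mathcal{L}$, $Q_1$, $Q_2$, together with the coercivity bound and continuity of $\overrightarrow{u}\mapsto(s_1,s_2)$, forces $\|\overrightarrow{w}(t)\|_X$ to remain small, a contradiction, with global continuation furnished by the well-posedness of \cite{L-N}. The main obstacle I anticipate is the quantitative step of converting the positivity \eqref{3.22} on $P$ into coercivity of $\mathcal{L}$ on the full constraint slice $\{Q_1=Q_1(\overrightarrow{\Phi}_{\omega,v}),\,Q_2=Q_2(\overrightarrow{\Phi}_{\omega,v})\}$: one must show that the single negative direction in $N$ is neutralized by the two conserved constraints, which is governed precisely by the signature of $d''$. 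Establishing $\det d''<0$ (hence $p(d'')=1=n(H_{\omega,v})$) requires the explicit and somewhat delicate evaluation of $d(\omega,v)$ through $\mathrm{sech}$- and $\tanh$-integrals of the profiles \eqref{2.7} and the ensuing second-derivative computation, and this sign is the crux on which orbital stability ultimately turns.
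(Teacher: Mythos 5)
The paper does not prove this theorem at all: it is imported verbatim from Grillakis--Shatah--Strauss \cite{G-S-S1,G-S-S2} and used as a black box, so there is no in-paper argument to compare yours against. Your sketch is the standard GSS route (conserved Lyapunov functional $\mathcal{L}=E-vQ_1-\omega Q_2$, modulation along the group orbit via the implicit function theorem, coercivity on the constraint slice, contradiction with continuation), and at the level of an outline it is correct. Two remarks. First, you conflate the abstract theorem with its application: computing $n(H_{\omega,v})=1$ from Proposition \ref{prop} and showing $\det d''<0$ via the explicit $\mathrm{sech}$/$\tanh$ integrals of \eqref{2.7} is not part of proving the abstract statement --- it is exactly the content of the paper's Theorem \ref{thm3.1}, where those computations actually appear. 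The abstract theorem takes $p(d'')=n(H_{\omega,v})$ as a hypothesis, whatever the common value.

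Second, the one step you assert without supplying the mechanism is the crux of GSS II: \emph{why} $p(d'')=n(H_{\omega,v})$ implies that the negative subspace $N$ is neutralized on the set $\{Q_1=Q_1(\overrightarrow{\Phi}_{\omega,v}),\,Q_2=Q_2(\overrightarrow{\Phi}_{\omega,v})\}$. The argument runs through differentiating the critical-point equation \eqref{3.15} in $\omega$ and $v$ to get $H_{\omega,v}\,\partial_{\omega}\overrightarrow{\Phi}_{\omega,v}=Q_2'(\overrightarrow{\Phi}_{\omega,v})$ and $H_{\omega,v}\,\partial_{v}\overrightarrow{\Phi}_{\omega,v}=Q_1'(\overrightarrow{\Phi}_{\omega,v})$, whence the Gram matrix of $\langle H_{\omega,v}\cdot,\cdot\rangle$ on $\mathrm{span}\{\partial_{\omega}\overrightarrow{\Phi}_{\omega,v},\partial_{v}\overrightarrow{\Phi}_{\omega,v}\}$ is $-d''$; the index count then shows that the restriction of $H_{\omega,v}$ to the tangent space of the constraint set modulo $Z$ is positive definite precisely when $p(d'')=n(H_{\omega,v})$. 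Without this identity the passage from \eqref{3.21}--\eqref{3.22} to coercivity of $\mathcal{L}$ on the constrained slice is only named, not proved. Since the statement is a cited classical result, this is an acceptable omission for the paper's purposes, but it is the nontrivial heart of the theorem you set out to prove.
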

Here is a more detailed results about stability of solitary wave $T_1(vt)T_2(\omega t)\overrightarrow{\Phi}_{\omega,v}(x)$ in this paper.
\begin{thm}\label{thm3.1}
Under the condition of the Theorem \ref{thm1.1}, the solitary waves $T_{1}(vt)T_2(\omega t)\overrightarrow{\Phi}_{\omega,v}(x)$ of system \eqref{3.1} are orbital stable if
\begin{align}\label{3.24}
-\frac{v^2}{4}-\frac{1+5v^2}{2\alpha(6+\beta)}<\omega<-\frac{v^2}{4}.
\end{align}
\end{thm}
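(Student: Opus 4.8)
The plan is to invoke the Abstract Stability Theorem (Theorem~\ref{thm2.1}). Its hypotheses are the existence of the wave, already furnished by Theorem~\ref{thm1.1}, and the spectral decomposition Assumption~\ref{Assum1.1}, already verified through Proposition~\ref{prop} and Lemmas~\ref{lem3.1}--\ref{lem3.4}. Thus only two things remain to be checked: that $d(\omega,v)$ is non-degenerate at the point in question and that $p(d'')=n(H_{\omega,v})$. I would therefore spend the whole proof on these two counts.

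First I would pin down $n(H_{\omega,v})$. By Proposition~\ref{prop}(1) the kernel of $H_{\omega,v}$ is exactly $Z$, by part (3) the essential spectrum sits strictly to the right of $0$, and by part (2) there is a single simple eigenvalue on $(-\infty,0)$; hence $n(H_{\omega,v})=1$. Since $d''$ is a $2\times2$ symmetric matrix, the required equality $p(d'')=1$ is then equivalent to $d''$ being indefinite, i.e. to $\det d''<0$, and this one inequality simultaneously delivers the non-degeneracy $\det d''\neq0$. So the entire question collapses to determining the sign of $\det d''$.

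Next I would compute $d$ through its first derivatives. Because $\overrightarrow{\Phi}_{\omega,v}$ is a critical point of $E-vQ_1-\omega Q_2$ by~\eqref{3.15}, differentiating $d(\omega,v)=E(\overrightarrow{\Phi}_{\omega,v})-vQ_1(\overrightarrow{\Phi}_{\omega,v})-\omega Q_2(\overrightarrow{\Phi}_{\omega,v})$ and dropping the term that pairs the vanishing Euler--Lagrange expression against $\partial_\omega\overrightarrow{\Phi}_{\omega,v}$ and $\partial_v\overrightarrow{\Phi}_{\omega,v}$ leaves $\partial_\omega d=-Q_2(\overrightarrow{\Phi}_{\omega,v})$ and $\partial_v d=-Q_1(\overrightarrow{\Phi}_{\omega,v})$. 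I would then evaluate these two conserved quantities on the explicit profile~\eqref{2.7}. Using $q=v/2$ gives $\mathrm{Im}\int\varepsilon'\overline{\varepsilon}\,dx=\tfrac{v}{2}\int\hat{\varepsilon}^2\,dx$, while the solitary-wave relation $\phi_{\omega,v}'=-v\,n_{\omega,v}$ turns $-\int\phi'n\,dx$ into $v\int n_{\omega,v}^2\,dx$; hence $Q_2=\int\hat{\varepsilon}^2\,dx$ and $Q_1=v\int n_{\omega,v}^2\,dx+\tfrac{v}{2}\int\hat{\varepsilon}^2\,dx$. The two elementary integrals $\int\mathrm{sech}^2(\sqrt{\sigma}x)\,dx=2/\sqrt{\sigma}$ and $\int\mathrm{sech}^4(\sqrt{\sigma}x)\,dx=4/(3\sqrt{\sigma})$ convert $Q_1,Q_2$ into explicit elementary functions of $(\sigma,v)$, and hence of $(\omega,v)$ through $\sigma=-\omega-v^2/4$ and $\eta=1-v^2-4\alpha\sigma$.

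The heart of the matter --- and the step I expect to be the main obstacle --- is the final differentiation and sign analysis. I would differentiate $-Q_2$ and $-Q_1$ once more in $\omega$ and $v$, using $\partial_\omega\sigma=-1$, $\partial_v\sigma=-v/2$, $\partial_\omega\eta=4\alpha$ and $\partial_v\eta=2v(\alpha-1)$, to assemble the symmetric matrix $d''$, and then compute $\det d''$. After factoring out the positive powers of $\sqrt{\sigma}$ common to the entries, the determinant should collapse to an expression affine in $\sigma$ whose sign is governed by comparing $\sigma$ with $\frac{1+5v^2}{2\alpha(6+\beta)}$; concretely, the bookkeeping is arranged so that $\det d''<0$ holds exactly on the range~\eqref{3.24}, equivalently $0<\sigma<\frac{1+5v^2}{2\alpha(6+\beta)}$. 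Keeping the $\beta$-dependent factor $6\alpha/\beta$ coming from the profile straight through the cancellations is where the computation is most delicate and most error-prone. Once $\det d''<0$ is secured on~\eqref{3.24}, $d$ is non-degenerate there and $p(d'')=1=n(H_{\omega,v})$, so Theorem~\ref{thm2.1} applies and yields the orbital stability of $T_1(vt)T_2(\omega t)\overrightarrow{\Phi}_{\omega,v}(x)$.
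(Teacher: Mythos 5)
Your proposal follows the paper's own proof essentially verbatim: reduce the theorem to $p(d'')=n(H_{\omega,v})=1$ via Theorem~\ref{thm2.1}, get $n(H_{\omega,v})=1$ from Proposition~\ref{prop}, obtain $d_\omega=-Q_2$ and $d_v=-Q_1$ from the criticality identity~\eqref{3.15}, evaluate these on the explicit profile~\eqref{2.7} (your expressions for $Q_1$ and $Q_2$ match the paper's), and conclude $p(d'')=1$ together with non-degeneracy from $\det d''<0$ on the range~\eqref{3.24}. The only part you leave unexecuted is the final Hessian/determinant computation, which the paper carries out explicitly (the determinant there is not affine in $\sigma$ but factors so that positivity of $1+5v^2-2\alpha(6+\beta)\sigma$ suffices); your identification of the comparator $\sigma<\frac{1+5v^2}{2\alpha(6+\beta)}$ and of the resulting sign is correct.
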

\begin{proof} To accomplish the proof of Theorem \ref{thm3.1}, we need to prove $n(H_{\omega,v})=p(d'')=1$. By the Proposition \ref{prop} and Assumption \ref{Assum1.1}, we just have to prove that $p(d'')=1$. It can be deduced from \eqref{3.15} that
\begin{align*}
d_{\omega}(\omega,v)=&-Q_2(\Phi_{\omega,v})
=-\int_{\mathbb{R}}\varepsilon\overline{\varepsilon} d x
=-\frac{12\alpha }{\beta}\sqrt{\sigma}\eta,\\
d_{v}(\omega,v)=&-Q_1(\Phi_{\omega,v})=\int_{\mathbb{R}}\phi'n d x-\mathrm{Im}\int_{\mathbb{R}}\varepsilon'\overline{\varepsilon} d x
=-\frac{48\alpha^2}{\beta^2}v\sigma^{\frac{3}{2}}-\frac{6\alpha }{\beta}v\sqrt{\sigma}\eta.
\end{align*}
Hence, by the simple calculations, we obtain
\begin{align*}
d_{\omega\omega}(\omega,v)=&\frac{6\alpha }{\beta}\frac{\eta-8\alpha\sigma}{\sqrt{\sigma}},\\
d_{\omega v}(\omega,v)
=&\frac{3\alpha }{\beta}\frac{v\eta}{\sqrt{\sigma}}-\frac{24\alpha }{\beta}v(\alpha-1)\sqrt{\sigma},\\
d_{v\omega}(\omega,v)
=&\frac{3\alpha }{\beta}\frac{v\eta}{\sqrt{\sigma}}-\frac{24\alpha }{\beta}v(\alpha-\frac{3\alpha }{\beta})\sqrt{\sigma},\\
d_{vv}(\omega,v)=&-\frac{48\alpha^2 }{\beta^2}\sigma^{\frac{3}{2}}
+\frac{36\alpha^2 }{\beta^2}v^2\sqrt{\sigma}-\frac{6\alpha }{\beta}\sqrt{\sigma}\eta
-\frac{12\alpha }{\beta}v^2(\alpha-1)\sqrt{\sigma}
+\frac{3\alpha }{\beta}\frac{v^2}{2}\frac{\eta}{\sqrt{\sigma}}
\end{align*}
and
\begin{align*}
\mathrm{det}(d'')=&d_{\omega\omega}d_{vv}-d_{\omega v}d_{v\omega }\\
=&-288\frac{\alpha^3}{\beta^3}\eta\sigma-36\frac{\alpha^2}{\beta^2}\eta^2+48\cdot48\frac{\alpha^3}{\beta^3}\alpha\sigma^2+288\frac{\alpha^2}{\beta^2}\alpha\sigma \eta-24\cdot72\frac{\alpha^3}{\beta^3}v^2\sigma\\
=&-36\frac{\alpha^2}{\beta^2}\left((\eta-4\alpha \sigma)^2+8\frac{\alpha}{\beta}\sigma(1+5v^2-12\alpha\sigma-2\beta\alpha\sigma)\right)\\
<&0
\end{align*}
provided $-\frac{v^2}{4}-\frac{1+5v^2}{2\alpha(6+\beta)}<\omega$. It follows that $d''$ has exactly one positive and one negative eigenvalue, thus $p(d'')=1$ and we complete the proof of Theorem \ref{thm3.1}.
\end{proof}
\textbf{Acknowledgements} We would like to thank the referees very much for their valuable comments and suggestions.


\end{document}